    \newenvironment{dedication}
        {\vspace{6ex}\begin{quotation}\begin{center}\begin{em}}
        {\par\end{em}\end{center}\end{quotation}}
\newtheorem{thm}{Theorem}
\newtheorem{lem}[thm]{Lemma}
\theoremstyle{definition}
\newtheorem{defn}[thm]{Definition}
\theoremstyle{remark}
\newcommand{\DeclareAutoPairedDelimiter}[3]{%
	\expandafter\DeclarePairedDelimiter\csname Auto\string#1\endcsname{#2}{#3}%
	\begingroup\edef\x{\endgroup
		\noexpand\DeclareRobustCommand{\noexpand#1}{%
			\expandafter\noexpand\csname Auto\string#1\endcsname*}}%
	\x}
\DeclareAutoPairedDelimiter{\abs}{\lvert}{\rvert}
\DeclareAutoPairedDelimiter{\norm}{\lVert}{\rVert}
\DeclareAutoPairedDelimiter{\bra}{(}{ )}
\DeclareAutoPairedDelimiter{\pra}{[}{]}
\DeclareAutoPairedDelimiter{\set}{\{}{\}}
\DeclareAutoPairedDelimiter{\skp}{\langle}{\rangle}
\DeclareMathAlphabet{\mathup}{OT1}{\familydefault}{m}{n}
\newcommand{\dx}[1]{\mathop{}\!\mathup{d} #1}
\DeclareMathOperator{\tr}{tr}
\newcommand{\R}{\mathds{R}}
\definecolor{darkblue}{rgb}{0,0,0.6}
\title{Characterizing compact coincidence sets in the obstacle problem---a short proof}
\author{Simon Eberle$^1$}
\address{$^1$Faculty of Mathematics, University of Duisburg-Essen, Germany}
\email{simon.eberle@uni-due.de}
\author{Georg S. Weiss$^2$}
\address{$^2$Faculty of Mathematics, University of Duisburg-Essen, Germany}
\email{georg.weiss@uni-due.de}
\let\rho\varrho
\let\phi\varphi
\let\epsilon\varepsilon
\begin{document}
\begin{dedication}
\hspace{4cm}
\vspace*{9cm}{Dedicated to Nina Nikolaevna Uraltseva on the occasion of her 85th birthday.}
\end{dedication}
	\maketitle
\section{Introduction}
The problem of characterizing global solutions of the obstacle problem ---while being crucial to the analysis of the behavior of the free boundary close to singularities--- originated in characterizing null quadrature domains in potential analysis. The first partial result in dimension $n=3$ is due to P. Dive (\cite{dive}). H. Lewy (\cite{Lewy79}), too, arrived at ellipsoids in $n=3$.
In two dimensions a complete characterization of null quadrature domains was proved by Makoto Sakai (\cite{Sakai}) by means of complex analysis: only half planes, ellipsoids and paraboloids are possible.
E. DiBenedetto- A. Friedman (\cite[Theorem 5.1]{DiBenedettoFriedman}) in higher dimensions showed ---drawing on the result by P. Dive--- that any bounded domain $K$ with non-empty interior such that the gravity force produced by the homoeoid $\lambda K \setminus K$ is zero in a neighborhood of the origin then $K$ is an ellipsoid, that is a set of the form
$\set {x \in \R^n : x^T A x \leq 1}$, where $A$ is a positive definite, symmetric matrix $A \in \R^{n \times n}$. E. DiBenedetto- A. Friedman applied this result to classical solutions of the Hele-Shaw flow (which includes classical solutions of the one-phase obstacle problem) where they prove that provided that the coincidence set $\{u=0\}$ of the solution $u$ is bounded with non-empty interior and that {\em $\{u=0\}$ is symmetric with respect to each hyperplane $\{ x_j=0\}$}, 
$\{u=0\}$ is an ellipsoid. Finally M. Sakai-A. Friedman (\cite{FriedmanSakai}) removed the unnecessary symmetry assumption in the null quadrature domain setting.

Motivated by the almost completely open problem of characterizing unbounded coincidence sets in higher dimensions (compare the conjectures in \cite{Karp}),
we give in this note a concise and easy-to-extend proof of the fact that
if coincidence set $\set {u=0}$ is bounded with nonempty interior then it is an ellipsoid. Our proof is based on the idea in \cite{dive} of touching the unknown set $\set {u=0}$ with a known ellipsoid. However we stick during the whole proof to the obstacle problem setting, that is we work with the solution $u$ avoiding homoeoids and null quadrature domains which will make our result rather short. We will neither need symmetry of $\set {u=0}$ at any stage of the proof nor any regularity assumption of $\set {u=0}$. 

\section*{Acknowledgments}
The authors thank the Hausdorff Research Institute for Mathematics of the university of Bonn for its hospitality, Herbert Koch for discussions and Luis Caffarelli for pointing out reference \cite{Lewy79} to the authors.
 
\section{Main Theorem and Proof}
Let $u$ be a nonnegative weak solution of
\begin{equation} \label{PDE}
	\Delta u = \chi_{\set {u>0}} \quad \text{ in } \R^n.
\end{equation}
It is a known fact that the second derivatives are globally bounded, i.e.
\begin{equation}\label{bounded_second_derivative}
\norm {D^2u}_{L^\infty(\R^n)} < +\infty.
\end{equation}
and (see for example \cite {Monneau2003} or \cite{PetrosyanShahgholianUraltseva_book}) that 
 \begin{equation} \label{PDE_asymptotics}
 	\frac{u(rx)}{r^2} \to x^T Q x \quad \text{ in } C^{1,\alpha}_{loc}\cap
W^{2,p}_{loc}\text{ as } r \to \infty,
 \end{equation}
 where $Q  \in \R^{n \times n}$ is positive definite, symmetric and $\tr(Q) =  \frac{1}{2}$.
Without loss of generality we assume that the coordinate system is rotated such that $Q$ is a diagonal matrix. 
 
\begin{defn}
	We define an ellipsoid as a set $\set {x \in \R^n : x^T A x \leq 1}$, where $A$ is a positive definite, symmetric matrix $A \in \R^{n \times n}$.
\end{defn}
\begin{thm}[Main Theorem]\label{theorem:main_theorem}
	Let $n\ge 3$ and $u$ be as above. If the coincidence set $\set {u=0}$ is bounded with nonempty interior then it is an ellipsoid.
\end{thm}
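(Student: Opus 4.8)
The plan is to identify $u$ with the obstacle-problem solution attached to a suitable ellipsoid, by matching the two functions at infinity and closing the argument with a maximum principle. \emph{Step 1: a representation of $u$.} Write $\Omega:=\set{u=0}$ and let $\Phi$ be the Newtonian kernel of $\R^n$, so $-\Delta\Phi=\delta_0$; here $n\ge 3$ enters, guaranteeing $\Phi>0$ and decay at infinity of Newtonian potentials of bounded sets. By \eqref{PDE} the function $u-\abs{x}^2/(2n)-\mathcal N$, where $\mathcal N(x):=\int_\Omega\Phi(x-y)\dx{y}$, is harmonic in $\R^n$; by \eqref{PDE_asymptotics} and boundedness of $\Omega$ it grows at most quadratically, hence is a harmonic polynomial of degree $\le 2$. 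A translation (which preserves \eqref{PDE} and $Q$) kills its linear part, leaving $u(x)=\mathcal N(x)+x^{T}Qx+c_u$ with $c_u\in\R$; evaluating this at an interior point of $\Omega$ and using $\Phi>0$ and $\abs{\Omega}>0$ (nonempty interior) gives $c_u<0$.

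\emph{Step 2: ellipsoids.} For an ellipsoid $E=\set{x:x^{T}Ax\le 1}$ set $\mathcal N_E(x):=\int_E\Phi(x-y)\dx{y}$. By the classical fact that the Newtonian potential of a homogeneous ellipsoid is a quadratic polynomial on $E$, the function $\abs{x}^2/(2n)+\mathcal N_E$ coincides on $E$ with a harmonic polynomial $h_E$ of degree $\le 2$; then $v_E:=\abs{x}^2/(2n)+\mathcal N_E-h_E$ vanishes on $E$ and — again classically for ellipsoids — satisfies $v_E\ge 0$ and $\set{v_E=0}=E$, so $v_E$ solves $\Delta v_E=\chi_{\set{v_E>0}}$ and $v_E(x)=\mathcal N_E(x)+x^{T}Q_Ex+b_E\cdot x+c_E$ with $Q_E$ symmetric positive definite and $\tr Q_E=1/2$. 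If $E$ is centred at the origin then $v_E$ is even, so $b_E=0$ and $c_E=-\mathcal N_E(0)<0$; under dilations one has $Q_{tE}=Q_E$, $b_{tE}=tb_E$, $c_{tE}=t^{2}c_E$.

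\emph{Step 3: matching.} From the classical (elliptic-integral) formula for $Q_E$ one sees that, as the semi-axes of an axis-aligned ellipsoid vary, $Q_E$ ranges over all diagonal positive definite matrices of trace $1/2$; since our $Q$ is diagonal, choose such an $E_0$ with $Q_{E_0}=Q$. As both $c_{E_0}$ and $c_u$ are negative, there is $t^{*}>0$ with $c_{t^{*}E_0}=c_u$. For $E:=t^{*}E_0$ the functions $v_E$ and $u$ then have the same polynomial part, so $w:=v_E-u=\mathcal N_E-\mathcal N\to 0$ as $\abs{x}\to\infty$.

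\emph{Step 4: conclusion.} Almost everywhere $\Delta w=\chi_{\Omega\setminus E}-\chi_{E\setminus\Omega}$. On $E\setminus\Omega$ one has $v_E=0<u$, so the open set $\set{w>0}$ avoids $E\setminus\Omega$ and $\Delta w\ge 0$ there; since $w$ is continuous, $w^{+}:=\max(w,0)$ is therefore subharmonic on all of $\R^n$, and being nonnegative with $w^{+}\to 0$ at infinity it must vanish identically, i.e.\ $v_E\le u$. Symmetrically, on $\Omega\setminus E$ one has $w=v_E\ge 0$, so $(-w)^{+}$ is subharmonic, nonnegative and vanishes at infinity, hence $\equiv 0$, i.e.\ $v_E\ge u$. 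Therefore $u\equiv v_E$, and $\set{u=0}=\set{v_E=0}=E$ is an ellipsoid. The step I expect to cost the most is Step 3: checking that the assignment $E\mapsto(Q_E,b_E,c_E)$ really reaches the asymptotic data of $u$, which forces one to use exactly the three available freedoms — shape for $Q$, translation for the linear part, dilation for the constant — together with the sign $c_u<0$; the potential-theoretic input behind it (Newton's theorem and the formula for $Q_E$) is classical but must be cited or re-derived, after which the representation formulas of Steps 1–2 and the $w^{\pm}$-subharmonicity trick of Step 4 keep the proof short.
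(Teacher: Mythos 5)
Your proof is correct, and it takes a genuinely different route from the paper's. Both arguments start the same way: decompose $u$ as a Newton potential of $K:=\set{u=0}$ plus a quadratic polynomial, and invoke the DiBenedetto--Friedman construction of an axis-aligned ellipsoid solution with prescribed blow-down (the paper's Lemma~\ref{lemma:existence_of_ellipsoidal_solutions}, which, like you, also relies on \cite{CaffarelliKarpShahgholian_Pompeiu} for nonnegativity of the ellipsoidal solution and $\set{v_E=0}=E$). The two proofs then spend the remaining dilation freedom in quite different ways. The paper shrinks the ellipsoid until it first \emph{touches} $K$ from outside and concludes via a strict comparison principle for the obstacle problem plus Hopf's boundary point lemma at the contact point. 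You instead use dilation to match the \emph{constant} coefficient $c_u<0$ (and a translation to kill the linear part, using $Q>0$ invertible), so that $w:=v_E-u=\cN_E-\cN\to 0$ at infinity; then you note $\set{w>0}$ avoids $E\setminus\Omega$ while $\set{w<0}$ avoids $\Omega\setminus E$, making $w^+$ and $(-w)^+$ globally subharmonic and forcing $w\equiv 0$ by a Liouville-type argument, hence $u\equiv v_E$. What your approach buys: no Hopf lemma, no touching argument, no obstacle-problem comparison principle (the paper's Appendix~\ref{appendix:comparison_principle}), and no need for the weighted-barycenter normalization $\int_K y/|y|^n=0$ with $0\in\operatorname{int}K$ (the paper's Appendix~\ref{appendix:choice_of_zero_is_possible}); instead you only need the elementary translation $x_0=-\tfrac12 Q^{-1}b$. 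What it costs, as you note: the matching step (surjectivity of $E\mapsto Q_E$ onto diagonal positive definite trace-$\tfrac12$ matrices, plus the scaling $c_{tE}=t^2c_E$) leans on the classical potential theory of ellipsoids and should be cited explicitly, e.g.\ to \cite[(5.4)]{DiBenedettoFriedman}, the same source the paper uses.
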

The idea underlying the proof is extremely simple. We will touch the coincidence set $\set {u=0}$ with a suitable ellipsoid and apply a strong comparison principle to the respective solutions. Of course not any ellipsoid will do. We will need the following Lemma relating ellipsoids to solutions of the obstacle problem.
\begin{lem}[Existence of ellipsoid solutions] \label{lemma:existence_of_ellipsoidal_solutions}
For any polynomial $p(x) = x^T Q x$, where $Q \in \R^{n \times n}$ is diagonal, positive definite and $\operatorname{tr}(Q) = \frac{1}{2}$, there is an ellipsoid $E$ (symmetric with respect to $\set {x_j = 0}$ for all $j \in \set {1, \dots N}$) and a nonnegative solution  of the obstacle problem $u^E$ such that
\begin{align}
	\Delta u^E = \chi_{\set {u^E >0}} ~ \text{ in } \R^n \quad , \quad \set {u^E=0} = E \quad \text{ and } \quad \frac{u^E(rx)}{r^2} \to p(x) ~ \text{ as } r \to \infty.
\end{align}
\end{lem}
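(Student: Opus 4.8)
The plan is to construct $u^E$ from the classical Newtonian potential of a solid ellipsoid and then to adjust its semi-axes so that the blow-down equals $p$. Since $Q$ is diagonal I would look among axis-aligned ellipsoids: for $a=(a_1,\dots,a_n)$ with all $a_i>0$ put $E=E_a:=\set{x\in\R^n:\sum_{i=1}^n x_i^2/a_i^2\le1}$. Let $\Phi$ be the fundamental solution of $-\Delta$ on $\R^n$ --- this is where $n\ge3$ enters essentially --- and set $U_E(x):=-\int_E\Phi(x-y)\,\dx y$, so that $\Delta U_E=\chi_E$ in $\R^n$, $U_E<0$, and $U_E$ together with all its derivatives decays like $\abs{x}^{2-n}$ at infinity. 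By the classical computation of the potential of a homogeneous ellipsoid, $U_E$ restricted to $E$ is a quadratic polynomial, and by the reflection symmetries of $E$ it has no mixed terms: $U_E(x)=c_E+\sum_i q_i(a)\,x_i^2$ on $E$ with $c_E=U_E(0)<0$. Here $\Delta U_E=1$ on $E$ forces $\sum_i q_i(a)=\tfrac12$, and the explicit formula $q_i(a)=\tfrac14\,a_1\cdots a_n\int_0^\infty (a_i^2+s)^{-1}\bigl(\prod_{k=1}^n(a_k^2+s)\bigr)^{-1/2}\,\dx s$ shows in particular $q_i(a)>0$. I would then define $u^E:=P_E-U_E$, where $P_E(x):=c_E+\sum_i q_i(a)\,x_i^2$ is this polynomial now considered on all of $\R^n$.

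Most of the required properties would then be read off directly. On $E$ one has $u^E=P_E-U_E\equiv0$, while on the open set $\R^n\setminus E$ one has $\Delta u^E=\Delta P_E-\Delta U_E=1$. As $U_E\in C^{1,1}_{loc}(\R^n)$ (the Newtonian potential of a bounded, compactly supported function whose support has smooth boundary) and coincides with $P_E$ on the closed set $E$, the function $u^E$ lies in $C^{1,1}_{loc}(\R^n)$, with $u^E=\abs{\nabla u^E}=0$ on $\partial E$ and $D^2u^E$ bounded on $\R^n$; hence $u^E$ solves the obstacle problem in the sense of \eqref{PDE}--\eqref{bounded_second_derivative}. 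That $u^E>0$ on $\R^n\setminus E$ --- so that $\set{u^E=0}=E$ and $\Delta u^E=\chi_{\set{u^E>0}}$ --- would follow from the closed form of the exterior potential: for $x\notin E$ one gets $u^E(x)=\tfrac14\,a_1\cdots a_n\int_0^{\lambda(x)}\bigl(\sum_i x_i^2/(a_i^2+s)-1\bigr)\bigl(\prod_k(a_k^2+s)\bigr)^{-1/2}\,\dx s$, where $\lambda(x)>0$ is the unique root of $\sum_i x_i^2/(a_i^2+\lambda)=1$, and the integrand is strictly positive on $(0,\lambda(x))$. Finally, since $U_E(rx)/r^2\to0$ with its derivatives, $u^E(rx)/r^2\to\sum_i q_i(a)\,x_i^2=x^TQ_Ex$ as $r\to\infty$ (in fact in the mode of convergence of \eqref{PDE_asymptotics}), where $Q_E:=\operatorname{diag}(q_1(a),\dots,q_n(a))$.

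It would then remain to choose $a$ so that $Q_{E_a}=Q$, i.e.\ $q_i(a)=Q_i$ for all $i$, where $Q=\operatorname{diag}(Q_1,\dots,Q_n)$, $Q_i>0$, $\sum_i Q_i=\tfrac12$; this is the step I expect to be the main obstacle. The substitution $s\mapsto t^2s$ gives $q_i(ta)=q_i(a)$, so after normalizing $\prod_i a_i=1$ it suffices to show that the smooth map $\Psi\colon a\mapsto(q_1(a),\dots,q_n(a))$ sends $\Sigma:=\set{a\in(0,\infty)^n:\prod_i a_i=1}$ onto $\Delta:=\set{q\in(0,\infty)^n:\sum_i q_i=\tfrac12}$, these being connected $(n-1)$-dimensional manifolds. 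I would show that $\Psi(\Sigma)$ is simultaneously closed and open in $\Delta$. It is closed because $\Psi$ is proper as a map into $\Delta$: when the eccentricity of $E_a$ degenerates, i.e.\ $\max_i a_i/\min_i a_i\to\infty$, one checks from the integral formula that $q_m(a)\to0$ for any $m$ attaining the maximum, so no interior point of $\Delta$ can be a limit of values $\Psi(a_k)$ with $a_k$ leaving every compact subset of $\Sigma$. It is open because $\Psi$ is a local diffeomorphism: in the coordinates $\log a_j$ its differential is the matrix $J=[\partial q_i/\partial\log a_j]$, which has vanishing row sums (by scale-invariance) and vanishing column sums (because $\sum_i q_i\equiv\tfrac12$), while $J_{ii}<0$ and $J_{ij}>0$ for $i\neq j$ by the classical strict monotonicity of the ellipsoid-potential coefficients in the semi-axes (obtained by differentiating the integral formula); such a matrix has rank $n-1$ with kernel $\operatorname{span}(1,\dots,1)$ (by irreducibility), and since this kernel is transverse to $T\Sigma=\set{v:\sum_j v_j=0}$ while the image of $J$ lies in $T\Delta=\set{w:\sum_i w_i=0}$ (again by the vanishing column sums), $J$ restricts to an isomorphism $T\Sigma\to T\Delta$. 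As $\Delta$ is connected and $\Psi(\Sigma)\neq\emptyset$, this forces $\Psi(\Sigma)=\Delta$.

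Choosing $a\in\Sigma$ with $\Psi(a)=(Q_1,\dots,Q_n)$ and taking $E:=E_a$ and $u^E$ as above would then yield all the assertions, $E_a$ being automatically symmetric with respect to each $\set{x_j=0}$. I reiterate that the surjectivity of $\Psi$ is the one point requiring real work; in substance it is already present in the work of Dive and of DiBenedetto--Friedman, so one could also simply cite it from there, or obtain it by a Brouwer-degree computation based on the boundary behaviour recorded above.
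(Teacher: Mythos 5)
Your construction of $u^E$ is in substance the same as the paper's: both build $u^E$ as the difference between the interior quadratic and the Newtonian potential of a solid ellipsoid. The genuine difference is in how the two hard facts are established, and there your route is noticeably more self-contained. Where the paper simply cites DiBenedetto--Friedman's equation (5.4) for the existence of an ellipsoid whose interior potential matches the prescribed quadratic, you reconstruct this from the classical Ivory--Ferrers formula and prove surjectivity of $a\mapsto(q_1(a),\dots,q_n(a))$ by an open-and-closed argument on the simplex, using scale-invariance, the sign pattern of the Jacobian (zero row and column sums, positive off-diagonal entries), and the degeneration estimate for properness --- all of which check out. And where the paper proves $u^E>0$ outside $E$ by invoking Caffarelli--Karp--Shahgholian's convexity of the coincidence set and a short convex-hull contradiction, you read positivity off directly from the exterior formula $u^E(x)=\tfrac14\,a_1\cdots a_n\int_0^{\lambda(x)}\bigl(\sum_i x_i^2/(a_i^2+s)-1\bigr)\prod_k(a_k^2+s)^{-1/2}\,\dx{s}$, whose integrand is strictly positive on $(0,\lambda(x))$. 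Your version buys independence from the two cited results at the cost of reproducing a chunk of classical ellipsoidal-potential theory; the paper's version is much shorter precisely because it outsources those two steps. Both are correct, and you yourself correctly identify that your surjectivity step could alternatively be replaced by the citation the paper uses.
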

 \begin{proof}[Proof of Lemma \ref{lemma:existence_of_ellipsoidal_solutions}]
 	From \cite[see (5.4) therein]{DiBenedettoFriedman} we know that for any polynomial $p(x) = x^T Q x$ there is an ellipsoid $E:= \{x \in \R^n : x^T A x \leq 1\}$ ($A \in \R^{n \times n}$ positive definite, diagonal and symmetric, as in the statement of the Lemma.) such that its Newton-Potential
 	\begin{align}
 		u_E^{NP}(x) :=c(n) \int \limits_E \frac{1}{\abs {x-y}^{n-2}} \dx{y} = u_E^{NP}(0) - p(x) \quad \text{ in } E.
 	\end{align}
 	Here $c(n)$ is given by $c(n):= \frac{1}{n (n-2) \abs{B_1}}$.
 	We now define the solution $u^E$ by
 	\begin{align}
 		u^E(x) := p(x) - u_E^{NP}(0)+u_E^{NP}(x).
 	\end{align}
 	A direct computation shows that
 	\begin{align}
 		\Delta u^E = 1-\chi_E = \chi_{\R^n \setminus E} \quad , \quad u^E = 0 \text{ in } E      \quad \text{ and } \quad  \frac{u^E(rx)}{r^2} \to p(x) ~ \text{ as } r \to \infty ,	\end{align}
 		where we have used that $u_E^{NP}(x) \to 0$ as $\abs{x} \to \infty$.
 	So all we need to check is that $u^E>0$ in $\R^n \setminus E$. From \cite[Theorem II]{CaffarelliKarpShahgholian_Pompeiu} we infer that $u^E$ is nonnegative in $\R^n$ and that the coincidence set $\set {u^E = 0}$ is convex. Hence if there was $x_0 \not \in E$ such that $u^E(x_0)=0$, then this would imply that $\operatorname{conv}(\set{x_0} \cup E) \subset \set {u^E = 0}$ but this is impossible since $\Delta u^E =1$ in $\R^n \setminus E$.
 \end{proof}

{\em Proof of the Main Theorem:}\\
{\bf Step 1:} \emph{The Newton-potential solution}\\
In order to get a better understanding of the higher order asymptotics of the solution as $\abs{x} \to \infty$, we decompose it into a polynomial and the Newton-potential solution.

First, we modify the solution, such that the Laplacian is supported on a bounded domain. Setting
\begin{align}
	v(x):= u(x) - x^T Q x \quad \text{ in } \R^n,
\end{align}
$v$ solves
\begin{align}
	\Delta v = -\chi_{\set {u=0}}.
\end{align}
Let us denote $K:= \set {u=0}$ which is \emph{compact} by the assumption in Theorem \ref{theorem:main_theorem} and \emph{convex} because of \eqref{bounded_second_derivative} (see e.g. \cite[Theorem 5.1]{PetrosyanShahgholianUraltseva_book}). From here on we assume that the coordinate system is translated in such a way that
\begin{align}
	\int \limits_K \frac{y}{\abs{y}^n} \dx{y} = 0 \quad \text{ and } 0 \in \operatorname{int} K.
\end{align}
This is possible because $K$ is bounded and convex with nonempty interior. For details see Appendix \ref{appendix:choice_of_zero_is_possible}.

The Newton-potential solution (for $n \geq 3$) given by
\begin{align}
	v^{NP}(x) := c(n) \int \limits_K \frac{1}{\abs {x-y}^{n-2}} \dx{y} \quad , \text{ where } c(n) := \frac{1}{n (n-2) \abs{B_1}} >0
\end{align}
is a strong solution in $W^{2,p}_{loc}$ of
\begin{align}
	\Delta v^{NP} = -\chi_K.
\end{align}
Let us note that $\Delta (v-v^{NP})\equiv 0$, i.e the difference is harmonic. Since $\frac{(v-v^{NP})(rx)}{r^2} \to 0$ as $r \to \infty$ uniformly on $\partial B_1$ the difference must by a Liouville-type argument be a harmonic polynomial of degree at most one, in the following denoted by $p$.

Recall that $0 \in \operatorname{int} K$. This allows us to deduce that
\begin{align}
\nabla p(0) &= \nabla v(0) - \nabla v^{NP}(0) = 0 - c(n) \int \limits_K \frac{y}{\abs{y}^n} \dx{y} =0 , \\
p(0) &= v(0)-v^{NP}(0) = 0- v^{NP}(0) <0.
\end{align}
We infer that $p \equiv p(0)<0$ and that $v \equiv v^{NP} +p(0) = v^{NP}- v^{NP}(0)$.

{\bf Step 2:} \emph{The comparison function}\\
The idea in the following is to touch $K$ from the outside with an ellipsoid $E$ satisfying $$\int \limits_E \frac{y}{\abs{y}^n} \dx{y} =\int \limits_K \frac{y}{\abs{y}^n} \dx{y}, $$ and to compare $u$ with the respective solution $u^E$.
To do so, let us choose $E \subset \R^n$ as in Lemma \ref{lemma:existence_of_ellipsoidal_solutions} where we set $p$ in the lemma to be the blow-down of $u$ as defined in \eqref{PDE_asymptotics}.
The symmetry of $E$ yields that
\begin{align}
	\int \limits_E \frac{y}{\abs {y}^n} =0.
\end{align}
 Furthermore we define the family of ellipsoids
\begin{align}
(E_r)_{r >0} \quad , \quad E_r := \frac{1}{r} E 	
\end{align}
and the respective rescalings
	\begin{align}
		U_r(x) := \frac{u^E(rx)}{r^2} \quad \text{ in } \R^n.
	\end{align}
Note that for all $r>0$ we have that $U_r$ is a nonnegative solution of the obstacle problem \eqref{PDE} with coincidence set $\set {U_r = 0} = E_r$ and blow-down $p(x) = x^T Q x$.

Using the compactness of $K$ there is $r_1 > 0$ such that $K \subset \operatorname{int} E_{r_1}$.

As for $u$ we modify $U_r$ to
\begin{align}
	V_r(x) := U_r(x) - x^T Q x \quad \text{ for all } x \in \R^n.
\end{align}
It follows that
\begin{align}
	\Delta V_r = - \chi_{\set {U_r = 0}} = -\chi_{E_r}.
\end{align}
As before we define the Newton-potential solution
\begin{align}
	V_r^{NP}(x) := c(n) \int \limits_{E_r} \frac{1}{\abs {x-y}^{n-2}} \dx{y}.
\end{align}
Since $E_r$ is symmetric with respect to all planes $\set {x_i = 0}$ for $i \in \set {1, \dots, n}$ it follows that
\begin{align}
	\nabla V_r^{NP}(0) = c(n) \int \limits_{E_r} \frac{y}{\abs{y}^n} \dx{y} =0.
\end{align}
Since $\Delta (V_r - V_r^{NP}) \equiv 0$, the difference $V_r - V_r^{NP}$ is harmonic in the entire space, and from $\frac{(V_r - V_r^{NP})(sx)}{s^2} \to 0$ as $s \to \infty$
uniformly in $\partial B_1$ we infer again from a Liouville-type argument that the difference must be polynomial of degree at most one and we denote it by $p_r$.  
Calculations similar to the calculations above show that $p_r$ must be of degree zero:
\begin{align}
	\nabla p_r(0) &= \nabla V_r(0) - \nabla V_r^{NP}(0) = 0 - c(n) \int \limits_{E_r} \frac{y}{\abs{y}^n} \dx{y} =0 \\
	p_r(0) &= V_r(0) -V_r^{NP}(0) = -V_r^{NP}(0) <0.
\end{align}
It follows that $V_r \equiv V_r^{NP} - V_r^{NP}(0)$ for all $r>0$.
Now we are able to prove the following comparison Lemma.

{\bf Step 3:} \emph{Comparison principle} \\
For all  $r >0$ such that $K \subset E_r$ and $\abs {E_r \setminus K} \neq 0$,
\begin{align}
	u \geq U_r \quad \text{ in } \R^n \quad \text{ and } \quad u > U_r \quad  \text{ in } \R^n \setminus E_r.
	\end{align}
For a proof, first note
that for all $x \in \R^n$
\begin{align} \label{Newton_potential_comparison}
	V_r^{NP}(x) = c(n) \int \limits_{E_r} \frac{1}{\abs {x-y}^{n-2}} \dx{y} > c(n) \int \limits_{K} \frac{1}{\abs {x-y}^{n-2}} \dx{y} = v^{NP}(x).
\end{align}
Let us now apply \eqref{Newton_potential_comparison} to the difference
\begin{align}
	(u-U_r)(x) = (v-V_r)(x) = v^{NP}(x) -V_r^{NP}(x) + V_r^{NP}(0)- v^{NP}(0) \to   V_r^{NP}(0)- v^{NP}(0) >0 \quad \text{ uniformly as } \abs{x} \to \infty.
\end{align}
Hence there is $R>0$ such that 
\begin{align} \label{u_and_u_r_are_ordered_outside_B_R}
	u > U_r \quad \text{ in } \R^n \setminus B_R(0).
\end{align}
As $u$ and $U_r$ solve the same PDE \eqref{PDE} and we have a comparison principle for this nonlinear PDE (for details see Appendix \ref{appendix:comparison_principle}) we infer that
\begin{align}
u \geq U_r \quad \text{ in } \R^n,
\end{align}
and furthermore
\begin{align}
	u > U_r \quad \text{ in } \R^n \setminus E_r .
\end{align}
The strict inequality holds because $u-U_r$ is harmonic in $\R^n \setminus E_r$, and in case of the two graphs touching in $\R^n \setminus E_r$ the strong maximum principle would  yield that $u-U_r \equiv 0$ in $\R^n \setminus E_r$, contradicting \eqref{u_and_u_r_are_ordered_outside_B_R}.

{\bf Step 4:} \emph{Applying Hopf's principle to finish the proof}\\

\begin{figure}[h!]
	\frame{
\psset{xunit=1.0cm,yunit=1.0cm}
\begin{pspicture*}
(-5,-0.5)(5,4.9)
\psline(0,4.0)(1.7,3.3)(1.4,1.4)(-0.5,1.3)(-1.2,3)(0,4.0)
\psdot[dotscale=1.1](0,4)
\rput(0.9,2.0){$K$}
\rput(0,4.3){$x_0$}
\psellipse(0.0,2)(4.5,2)
\rput(3,2){$E_{r_0}$}
\end{pspicture*}
}
\caption{Touching the obstacle from outside.}
\label{fig:touch_from_outside}
\end{figure}
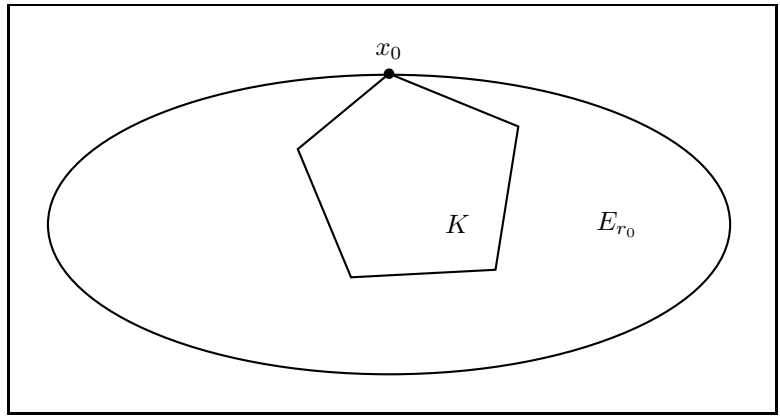

Let us now increase $r>0$ from $r=r_1$ to $r_0 >r_1$ such that the boundaries of $E_{r_0}$ and $K$ touch for the first time (see Figure \ref{fig:touch_from_outside}), i.e.
\begin{align}
	\partial E_{r_0} \cap \partial K \neq \emptyset \text{ and } 	\partial E_{r} \cap \partial K = \emptyset \text{ for all } r<r_0. 
\end{align}
Let $x_0 \in 	\partial E_{r_0} \cap \partial K$ be a touching point.
Then either $K = E_{r_0}$ and Theorem \ref{theorem:main_theorem} is proved, or $K \neq E_{r_0}$.
The latter would imply that $\abs{E_{r_0} \setminus K} > 0$. In order to see this assume that $A, B \subset \R^N$ are two convex, compact sets with non-empty interior such that $A \subset B$ and $\abs{B \setminus A} =0$. Then since $(\operatorname{int} B) \setminus A$ is open, $\abs{B \setminus A} =0$ implies that $(\operatorname{int} B) \setminus A = \emptyset$. Thus $\operatorname{int} B \subset A$. 
If there is $\bar{x} \in \partial B \setminus A$, then since $B$ is convex, $\operatorname{conv}(\set {\bar{x}} \cup A) \subset B$ and $\operatorname{conv}(\set {\bar{x}} \cup A) \setminus A$ has non-empty interior, as $\operatorname{int} B \neq \emptyset$, $\operatorname{int} B \subset A$ and $\operatorname{dist}(\bar{x}, A) >0$. It follows that $\abs{B \setminus A} >0$, a contradiction.

  This allows us to apply Step 3 with $r=r_0$ and to obtain that
\begin{align}
	u > U_{r_0} \quad \text{ in } \R^n \setminus E_{r_0}.
\end{align}
Furthermore
\begin{align}
	\Delta (u- U_{r_0}) = 0 \quad \text{ in } \R^n \setminus E_{r_0}.
\end{align}
Since the boundary of the ellipsoid is smooth, there is an open ball $B \subset \R^n \setminus E_{r_0}$ such that
\begin{align}
	\bar{B} \cap E_{r_0} = \set{x_0} .
\end{align}
From the classical Hopf principle we infer that 
\begin{align}
	\frac{\partial(u-U_{r_0})}{\partial \nu}(x_0) <0,
\end{align}
where $\nu$ is the outer unit normal on $\partial B$ at $x_0$. But this is impossible since $x_0$ is a free boundary point of both $u$ and $U_{r_0}$, implying that
\begin{align}
	\nabla u(x_0) = 0 = \nabla U_{r_0}(x_0).
\end{align}
Hence the assumption $K \neq E_{r_0}$ must have been wrong and Theorem \ref{theorem:main_theorem} is proved.\qed

\vfill

\appendix

\section{The weighted center of gravity of $K$ can be chosen to be zero by a translation} \label{appendix:choice_of_zero_is_possible}
We will show that there is $x^0\in K$ such that
	\begin{align}
	\int \limits_{K - x^0} \frac{y}{ \abs {y}^n } \dx{y} = 0.
	\end{align}
	\begin{enumerate}[1.]
	\item First we show that there is $x^0 \in \R^n$ such that 
	\begin{align}\label{center}
	\int \limits_{K - x^0} \frac{y}{ \abs {y}^n } \dx{y} = 0.
	\end{align}
	To do so we choose $R>0$ such that $K \subset \bar B_R$ and for all $\epsilon >0$ define the continuous operator $T^\epsilon: \bar B_R \to \R^n$ 
	\begin{align}
	T^\epsilon(x) := \frac{  \int \limits_K \frac{y}{\abs {y-x}^{n-\epsilon}}  \dx{y}  }{\int \limits_K \frac{1}{ \abs {y-x}^{n-\epsilon} } \dx{y}} .
	\end{align}
	(Note that we have only employed the regularization with $\epsilon$ in order to ensure integrability of all terms involved.)
	$T^\epsilon$ is a self-map because
	\begin{align}
	\abs {T^\epsilon(x)} \leq \frac{R \int \limits_K \frac{1}{ \abs {y-x}^{n-\epsilon} } \dx{y} }{\int \limits_K \frac{1}{ \abs {y-x}^{n-\epsilon} }  \dx{y}} = R.
	\end{align}
	Now Brouwer's fixed point theorem yields that for all $\epsilon >0$ the continuous self-map $T^\epsilon: \bar B_R \to \bar B_R$ has a fixed point $x^\epsilon \in \bar B_R$. 
	Since for all $\epsilon >0$ we know that $x^\epsilon \in \bar B_R$,
	\begin{align}
		x^{\epsilon_m} \to x^0 \in \bar B_R \quad \text{ as } m \to \infty.
	\end{align}
Since
$$\left|\frac{y-x^{\epsilon_m}}{\abs{y-x^{\epsilon_m}}^{n-{\epsilon_m}}}\right|
\le \chi_K\left(\abs{y-x^{\epsilon_m}}+\left|\frac{y-x^{\epsilon_m}}{\abs{y-x^{\epsilon_m}}^{n}}\right|\right)$$
where the right-hand side converges in $L^1$,
Lebesgue's (generalized) convergence theorem implies that
$$\int\limits_K \frac{y-x^{\epsilon_m}}{\abs{y-x^{\epsilon_m}}^{n-{\epsilon_m}}}\dx{y}
\to \int \limits_K \frac{y-x^0}{\abs{y-x^0}^{n}} \dx{y}$$
as $m\to\infty$.\\
	(Here we used the boundedness of $K$.)
	\item It remains to show that $x^0\in K$.
Assume that $x^0 \not \in K$. 
By the convexity of $K$ there is a hyperplane separating $x^0$ and $K$.
Let $\nu$ be a unit normal on that hyperplane. By a translation and rotation we may assume that $x^0 = 0$, $\nu = e_1$ and $K \subset \set {x_1 >0}$ or $K \subset \set {x_1 <0}$, implying that
\begin{align}
	\int \limits_{K} \frac{y_1}{ \abs {y}^n } \dx{y} \ne 0,
	\end{align}
contradicting \eqref{center}.
\end{enumerate}

\section{Comparison principle for the nonlinear PDE} \label{appendix:comparison_principle}
The following comparison principle for solutions of the obstacle problem is well-known and stated only for the sake of completeness.\\
Let $\Omega \subset \R^n$ be a bounded domain with Lipschitz-boundary and let $u$ and $v$ be weak solutions of \eqref{PDE}, i.e. $u,v \in W^{1,2}(\Omega)$ such that for all $\phi \in W^{1,2}_0(\Omega)$ it holds that
\begin{align} \label{weak_formulation_comparison}
	-\int \limits_\Omega \nabla u \cdot \nabla \phi = \int \limits_\Omega \chi_{\set {u>0}} \phi \quad \text{ and  } \quad  	-\int \limits_\Omega \nabla v \cdot \nabla \phi = \int \limits_\Omega \chi_{\set {v>0}} \phi.
\end{align}
Assume furthermore
\begin{align} \label{weak_comparison_principle_boundary_condition}
	v \leq u \quad \text{ on } \partial \Omega
\end{align}
in the sense of traces. Then, testing the weak formulation \eqref{weak_formulation_comparison} with the (admissible) test function $(v-u)^+ := \max \set {v-u,0}$ and subtracting the equations we obtain
\begin{align}
	- \int \limits_\Omega \abs {\nabla (v-u)^+}^2 = \int \limits_\Omega \bra {\chi_{\set{v>0}}-\chi_{\set {u>0}} } (v-u)^+ \geq 0.
\end{align} 
Hence 
\begin{align}
 \nabla (v-u)^+ \equiv 0 \quad \text{ a.e. in } \Omega.
\end{align}
This implies that
\begin{align}
	(v-u)^+ \equiv \operatorname{constant} \quad \text{ a.e. in } \Omega,
\end{align}
and from \eqref{weak_comparison_principle_boundary_condition} we infer that
\begin{align}
	(v-u)^+ \equiv 0  \quad \text{ a.e. in }\Omega,
\end{align}
which finishes the proof.

\bibliographystyle{abbrv}
\bibliography{ellipsiod_arXiv}

\end{document}